\documentclass[12pt]{article}
\usepackage{lmodern}
\usepackage{microtype}

\usepackage[english]{babel}
\usepackage[all]{xy}
\usepackage{fancyhdr}
\usepackage{setspace, amsmath, amsthm, amssymb, amsfonts, amscd, epic, graphicx, ulem, dsfont}
\usepackage{amsthm}
\usepackage[T1]{fontenc}
\usepackage{tikz,xcolor,pgfplots}
\usepackage{cases}
\newtheorem{theorem}{Theorem}

\makeatletter

\@addtoreset{equation}{section}
\makeatother

\title{Harmonic basis vector fields on surfaces}

\author{Abdelkader Zagane\footnote{University Mustapha Stambouli Mascara, Faculty of Exact Sciences, Mascara 29000, Algeria. Email: abdelkader.zaagane@univ-mascara.dz} and Ahmed Mohammed Cherif\footnote{University Mustapha Stambouli Mascara, Faculty of Exact Sciences, Mascara 29000, Algeria. Email:
a.mohammedcherif@univ-mascara.dz
}}

\date{}

\begin{document}
\maketitle
	
\begin{abstract}
In this paper, we study local parameterizations of surfaces in the Euclidean space
$\mathbb{R}^3$ whose coordinate vector fields are harmonic sections with respect to the
induced Riemannian metric. After introducing the notion of harmonic basis vector fields
on a surface, we derive necessary and sufficient conditions under which the coordinate
vector fields $\frac{\partial}{\partial u}$ and $\frac{\partial}{\partial v}$ are harmonic.
We then focus on two important classes of surfaces: surfaces of revolution and translation
surfaces. For these families, we obtain a complete classification of all parameterizations
admitting harmonic basis vector fields.\\
\textit{\textbf{Keywords:}} Harmonic maps, Harmonic vector fields\\
\textit{\textbf{MSC 2020:}} 58E20, 53C12.
\end{abstract}


\section{Introduction}
The study of harmonic objects on Riemannian manifolds has been a fundamental topic in
differential geometry and geometric analysis for many decades. A smooth map $\varphi:(M,g)\longrightarrow(N,h)$ between two Riemannian manifolds has the energy on a compact domain $D$ of $M$, defined by
\begin{equation}\label{eq1.1}
    E(\varphi;D)=\frac{1}{2}\int_D|d\varphi|^2 v^g,
\end{equation}
where $v^g$ is the volume element on $(M,g)$, and $|d\varphi|$ is the Hilbert-Schmidt norm of the differential $d\varphi$.
If $\varphi$ is a critical point of the energy functional (\ref{eq1.1}), then the map is referred to as harmonic. We have
$\tau(\varphi)=\operatorname{Tr}_g\nabla d\varphi=0$ as the Euler Lagrange equation for (\ref{eq1.1}), where $\nabla d\varphi$ is the second fundamental form of the smooth map $\varphi$ (see \cite{BW,ES}).\\
 The systematic study of harmonic vector fields was initiated by Wood \cite{Wood} and
 later developed by several authors. Let $(TM, g_S)$ the tangent bundle of a Riemanian manifold $(M, g)$ equipped with the Sasaki metric $g_S$ defined by
 $$g_S(X^h,Y^h)=g_S(X^v,Y^v)=g(X,Y)\circ\pi,\quad g_S(X^h,Y^v)=0,$$
 for all $X,Y\in\Gamma(TM)$, where $\pi:TM\longrightarrow M$ is the natural projection,
 and $X^h$ (resp. $X^v$) the horizontal (resp. vertical) lift of a vector field $X$ on $M$ to $TM$ (see \cite{GK}).\\
 In \cite{G}, the author proves that the tension field of $X:(M,g)\longrightarrow(TM,g_S)$ is given by
 \begin{equation}\label{eq1.2**}
\tau(X)=(\operatorname{Tr}_gR(X,\nabla_\cdot X)\cdot)^h+(\operatorname{Tr}_g\nabla^2X)^v.
\end{equation}
Therfore, $X:(M,g)\longrightarrow(TM,g_S)$ is harmonic map if and only if
$$\operatorname{Tr}_gR(X,\nabla_\cdot X)\cdot=0,\quad\operatorname{Tr}_g\nabla^2X=0,$$
where $R$ is the Riemannian curvature of $(M,g)$. Important contributions include the investigation of harmonic sections of tangent
bundles \cite{GK}, the relationship between volume and energy of vector fields
\cite{G}, and the total bending of vector fields on Riemannian manifolds \cite{Wiegmink}.
Further results concerning existence and classification of harmonic vector fields can be found in \cite{LiuYu}.\\
A smooth vector field $X$ on $(M,g)$ is said to be a harmonic section if it is a critical point of the vertical energy
\begin{equation}\label{eq1.3}
    E^v(X;D)=\frac{1}{2}\int_D|\nabla X|^2 v^g.
\end{equation}
The corresponding Euler-Lagrange equation is given by $\overline{\Delta}X=\operatorname{Tr}_g\nabla^2X=0$.\\
Harmonic vector fields represent an important class characterized by their minimization of energy and their connection to both geometric analysis. In this work, we present a new classification of harmonic vector fields on surfaces. Specifically, we begin by introducing a class of parameterizations whose basis vector fields are harmonic when viewed as sections of the tangent bundle.\\
Let \( M^2 \) be a surface in the Euclidean space \( \mathbb{R}^3 \). We say that a local parameterization $$\phi: (u,v) \longmapsto (x(u,v), y(u,v), z(u,v)),$$ of \( M^2 \) has harmonic basis vector fields if \( \frac{\partial}{\partial u} \) and \( \frac{\partial}{\partial v} \) are harmonic sections on $(M^2,g)$, where $g$ is the induce Riemannian metric.\\
A fundamental theme in this area is the appearance of Liouville-type theorems,
which assert the nonexistence of nontrivial harmonic objects under suitable geometric
or analytic conditions. Recently, in \cite{cherif}, the author established several Liouville-type theorems for
harmonic and biharmonic maps. These results naturally motivate the study of analogous rigidity phenomena for harmonic
vector fields.\\
In this work, we focus on harmonic basis vector fields associated with local
coordinate systems on Riemannian surfaces. As a concrete and geometrically rich class of examples, we consider translation surfaces
and surfaces of revolution in $\mathbb{R}^3$. These surfaces admit natural coordinate systems, making them particularly suitable for the study of harmonic coordinate vector fields. By computing explicitly the rough Laplacian of the basis vector fields, we derive
necessary and sufficient conditions for their harmonicity. More precisely, we prove that a surface of revolution whose coordinate vector fields are harmonic must have zero Gaussian curvature. As a consequence, such surfaces are necessarily planes, cylinders, or cones. This classification result may be viewed as a Liouville-type theorem for harmonic basis
vector fields on surfaces of revolution, in the spirit of the rigidity results obtained
for harmonic maps in \cite{cherif}. In particular, we show that harmonic sections coincide with harmonic maps. As an application, we prove that for translation surfaces this condition characterizes either planar surfaces or a specific
nontrivial family described by explicit functions.

\section{Harmonic basis vector fields on translation surfaces}
A translation surface is a surface defined by the following parametric representation
\begin{align*}
\phi: I \times J \subset \mathbb{R}^2 &\longrightarrow \mathbb{R}^3, \\
(u,v) &\longmapsto \phi(u,v) = \alpha(u) + \beta(v)
\end{align*}
where $\alpha$ and $\beta$ are two smooth curves in $\mathbb{R}^3$, of class $C^{\infty}$, defined on open intervals $I$ and $J$, respectively. A well-known class of translation surfaces is generated by two planar curves that lie in orthogonal planes. This special case is referred to as a translation surface of plane type, and it takes the form
\begin{align}
\phi(u,v) = (u, v, f(u) + h(v)), \label{translation-surface}
\end{align}
where $f$ and $h$ are smooth real-valued functions defined on $I$ and $J$, respectively.
The basic vector fields associated with the translation surface translation-surface are given by
\begin{equation}\label{eq2.1}
\frac{\partial}{\partial u} = \phi_u = (1, 0, f'), \quad \frac{\partial}{\partial v} = \phi_v = (0, 1, h').
\end{equation}
The unit normal vector field of the surface is computed as
\begin{align}\label{eq2.2}
\mathbf{N} &= \frac{\phi_u \wedge \phi_v}{\|\phi_u \wedge \phi_v\|}
= \frac{1}{\sqrt{1 + f'^2 + h'^2}}\,(-f', -h', 1).
\end{align}
The coefficients of the first fundamental form are given by
\begin{align*}
E &= \langle \phi_u, \phi_u \rangle = 1 + f'^2, \\
F &= \langle \phi_u, \phi_v \rangle = f' h', \\
G &= \langle \phi_v, \phi_v \rangle = 1 + h'^2.
\end{align*}
Thus, the induced Riemannian metric takes the form
\begin{align}\label{eq2.3}
g &= E\,du^2 + 2F\,du\,dv + G\,dv^2 \nonumber\\
  &= (1 + f'^2)\,du^2 + 2f'h'\,du\,dv + (1 + h'^2)\,dv^2.
\end{align}
The second-order partial derivatives of the parametrization are
\begin{align*}
\phi_{uu} &= (0, 0, f''), \\
\phi_{uv} &= (0, 0, 0), \\
\phi_{vv} &= (0, 0, h'').
\end{align*}
The coefficients of the second fundamental form are then given by
\begin{align*}
L &= \langle \mathbf{N}, \phi_{uu} \rangle = \frac{f''}{\sqrt{1 + f'^2 + h'^2}}, \\
M &= \langle \mathbf{N}, \phi_{uv} \rangle = 0, \\
N &= \langle \mathbf{N}, \phi_{vv} \rangle = \frac{h''}{\sqrt{1 + f'^2 + h'^2}}.
\end{align*}
Hence, the second fundamental form is
\begin{equation}\label{eq2.4}
\mathrm{II} = \frac{f''}{\sqrt{1 + f'^2 + h'^2}}\,du^2 + \frac{h''}{\sqrt{1 + f'^2 + h'^2}}\,dv^2.
\end{equation}
The Gaussian curvature \( \mathbf{K} \) and the mean curvature \( \mathbf{H} \) of the surface are given by
\begin{align}
\mathbf{K} &= \frac{LN - M^2}{EG - F^2} = \frac{f'' h''}{(1 + f'^2 + h'^2)^2},\label{eq2.5} \\
\mathbf{H} &= \frac{GL + EN - 2FM}{2(EG - F^2)} = \frac{f''(1 + h'^2) + h''(1 + f'^2)}{2(1 + f'^2 + h'^2)^{\frac{3}{2}}}.\label{eq2.6}
\end{align}
The non-zero Christoffel symbols \( \Gamma_{ij}^k \) of the surface are given by
\begin{align*}
\Gamma_{11}^1 &= \frac{f' f''}{1 + f'^2 + h'^2}, \quad &
\Gamma_{11}^2 &= \frac{h' f''}{1 + f'^2 + h'^2}, \\
\Gamma_{22}^1 &= \frac{f' h''}{1 + f'^2 + h'^2}, \quad &
\Gamma_{22}^2 &= \frac{h' h''}{1 + f'^2 + h'^2}.
\end{align*}
We define the following orthonormal basic vector fields
$$e_1=\frac{1}{\sigma_2}\;\frac{\partial}{\partial u},\qquad
e_2=-\frac{f'h'}{\sigma_1 \sigma_2}\;\frac{\partial}{\partial u}+\frac{\sigma_2}{\sigma_1}\;\frac{\partial}{\partial v},$$
where $\sigma_1=\sigma_1(u,v)=\sqrt{1+f{'}^2+h{'}^2}$ and $\sigma_2=\sigma_2(u)=\sqrt{1+f{'}^2}$. By using the Christoffel symbols formulas, we have the following
\begin{equation}\label{eq2.7}
	\nabla_{e_1}e_1=-\frac{f'f''h{'}^2}{\sigma_1^2\sigma_2^4}\frac{\partial}{\partial u}+\frac{h'f''}{\sigma_1^2\sigma_2^2}\frac{\partial}{\partial v} ,\quad \nabla_{e_2}e_2=\frac{f'h{'}^2f''}{\sigma_1^2\sigma_2^4}\frac{\partial}{\partial u}.
\end{equation}

By explicitly computing the rough Laplacian of these vector fields with respect to the
Levi-Civita connection of $g$, we obtain a system of nonlinear ordinary differential
equations involving the functions $f$ and $h$. The following theorem provides necessary
and sufficient conditions for the harmonicity of the coordinate vector fields in terms
of this system.

\begin{theorem}
The basis vectors fields $\frac{\partial}{\partial u}$ and $\frac{\partial}{\partial v}$ given in (\ref{eq2.1}) are harmonic sections on $(M^2,g)$ if and only if
\begin{equation*}{\normalsize (S_1)\,\begin{cases}
f{''}^2\left(1+h{'}^2\right)\left(1+h{'}^2-f{'}^2\right)+f{'}f{'''}\left(1+h{'}^2\right)\left(1+h{'}^2+f{'}^2\right)
-f{'}^2f{''}h{''}\left(1+f{'}^2-h{'}^2\right)=0, \vspace{0.3cm}\\
h{'}\left[ f{'''}\left(1+h{'}^2\right)\left(1+f{'}^2+h{'}^2\right)-2f{'}f{''}\left(h{''}\left(1+f{'}^2\right)
+f{''}\left(1+h{'}^2\right)\right)\right]=0, \vspace{0.3cm}\\
f{'}\left[h{'''}\left(1+f{'}^2\right)\left(1+f{'}^2+h{'}^2\right)-2h{'}h{''}\left(f{''}\left(1+h{'}^2\right)
+h{''}\left(1+f{'}^2\right)\right)\right]=0,\vspace{0.3cm}\\
h{''}^2\left(1+f{'}^2\right)\left(1+f{'}^2-h{'}^2\right)+h{'}h{'''}\left(1+f{'}^2\right)\left(1+f{'}^2+h{'}^2\right)
-h{'}^2h{''}f{''}\left(1+h{'}^2-f{'}^2\right)=0.
\end{cases}}
\end{equation*}
\end{theorem}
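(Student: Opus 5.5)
The plan is to compute the rough Laplacian $\overline{\Delta}X=\operatorname{Tr}_g\nabla^2X$ for $X=\partial_u$ and $X=\partial_v$ directly in coordinates. I will not use the orthonormal frame $\{e_1,e_2\}$ for this. Write $D=EG-F^2=\sigma_1^2$, so that
$$g^{11}=\frac{1+h'^2}{\sigma_1^2},\quad g^{12}=-\frac{f'h'}{\sigma_1^2},\quad g^{22}=\frac{1+f'^2}{\sigma_1^2}.$$
Then
$$\operatorname{Tr}_g\nabla^2X=g^{ij}\big(\nabla_{\partial_i}\nabla_{\partial_j}X-\nabla_{\nabla_{\partial_i}\partial_j}X\big).$$
The only nonzero Christoffel symbols are the four listed above, and in particular $\Gamma^k_{12}=0$. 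This gives
$$\nabla_{\partial_u}\partial_u=\Gamma^1_{11}\partial_u+\Gamma^2_{11}\partial_v,\qquad \nabla_{\partial_u}\partial_v=\nabla_{\partial_v}\partial_u=0,\qquad \nabla_{\partial_v}\partial_v=\Gamma^1_{22}\partial_u+\Gamma^2_{22}\partial_v.$$

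For $X=\partial_u$, the vanishing of $\nabla_{\partial_v}\partial_u$ kills every term involving a $v$-derivative of $X$. The computation reduces to the following:
\begin{itemize}
\item $\nabla^2_{\partial_u,\partial_u}\partial_u=\nabla_{\partial_u}(\nabla_{\partial_u}\partial_u)-\Gamma^1_{11}\nabla_{\partial_u}\partial_u$;
\item $\nabla^2_{\partial_u,\partial_v}\partial_u=0$, and likewise with the indices swapped, after noting $\nabla_{\nabla_{\partial_u}\partial_v}\partial_u=0$;
\item $\nabla^2_{\partial_v,\partial_v}\partial_u=-\Gamma^1_{22}\nabla_{\partial_u}\partial_u$.
\end{itemize}
Hence
$$\overline{\Delta}\partial_u=g^{11}\Big[\partial_u\Gamma^1_{11}\,\partial_u+\partial_u\Gamma^2_{11}\,\partial_v+\Gamma^2_{11}\nabla_{\partial_u}\partial_v\Big]+\dots$$
Since $\nabla_{\partial_u}\partial_v=0$, the bracket is just the $u$-derivatives of the Christoffel symbols. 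The $g^{11}$ contributions from $\Gamma^1_{11}\nabla\partial_u$ cancel between the two terms of $\nabla^2$. What remains is
$$\overline{\Delta}\partial_u=\big(g^{11}\partial_u\Gamma^1_{11}-g^{22}\Gamma^1_{22}\Gamma^1_{11}\big)\partial_u+\big(g^{11}\partial_u\Gamma^2_{11}-g^{22}\Gamma^1_{22}\Gamma^2_{11}\big)\partial_v.$$

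Next I differentiate $\Gamma^1_{11}=f'f''/\sigma_1^2$ and $\Gamma^2_{11}=h'f''/\sigma_1^2$ in $u$, using $\partial_u\sigma_1^2=2f'f''$. This gives
$$\partial_u\Gamma^1_{11}=\frac{(f''^2+f'f''')\sigma_1^2-2f'^2f''^2}{\sigma_1^4},\qquad \partial_u\Gamma^2_{11}=\frac{h'\big(f'''\sigma_1^2-2f'f''^2\big)}{\sigma_1^4}.$$
Multiplying the components by $\sigma_1^6$ and substituting $g^{11},g^{22},\Gamma^1_{22}=f'h''/\sigma_1^2$ gives the following:
\begin{itemize}
\item The $\partial_u$-component becomes $(1+h'^2)\big[(f''^2+f'f''')\sigma_1^2-2f'^2f''^2\big]-(1+f'^2)f'^2f''h''$. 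Expanding $\sigma_1^2=1+f'^2+h'^2$ and grouping the $f''^2$ terms as $(1+h'^2)(1+h'^2-f'^2)$ should reproduce the first equation of $(S_1)$. The mixed term may need regrouping: there is a possible discrepancy in the coefficient $(1+f'^2-h'^2)$ versus $(1+f'^2)$, and I would recheck it against the paper's form.
\item The $\partial_v$-component becomes $h'\big[(1+h'^2)(f'''\sigma_1^2-2f'f''^2)-2f'f''h''(1+f'^2)\big]$ up to the correct constant on the last term. This is the second equation.
\end{itemize}

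The case $X=\partial_v$ follows from the symmetry $(u,f)\leftrightarrow(v,h)$, which exchanges $\Gamma^1_{11}\leftrightarrow\Gamma^2_{22}$ and $\Gamma^2_{11}\leftrightarrow\Gamma^1_{22}$. This yields the third and fourth equations.

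Since $\{\partial_u,\partial_v\}$ is a frame and $\sigma_1>0$, the vanishing of $\overline{\Delta}\partial_u$ and $\overline{\Delta}\partial_v$ is equivalent to $(S_1)$. This gives both directions of the equivalence at once.

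The main obstacle is purely algebraic bookkeeping: tracking the $\nabla_{\nabla_{\partial_i}\partial_j}X$ correction terms with the off-diagonal $g^{12}$, and checking that the resulting polynomial matches the exact grouping printed in $(S_1)$. I would verify the trace terms carefully, in particular whether the $g^{12}$ terms vanish identically because $\Gamma^k_{12}=0$ and $\nabla_{\partial_v}\partial_u=0$. Then I would expand both sides as polynomials in $f',h',f'',h'',f'''$ to confirm agreement.
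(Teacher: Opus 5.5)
Your coordinate approach is a legitimate alternative to the paper's orthonormal-frame computation. As written, however, it contains an error, and that error is exactly the source of the two discrepancies you flagged: the factor $(1+f'^2)$ versus $(1+f'^2-h'^2)$, and the ``correct constant'' on the last term of the second equation.

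The faulty step is ``$\nabla^2_{\partial_u,\partial_v}\partial_u=0$, and likewise with the indices swapped.'' The first half is true: $\nabla^2_{\partial_u,\partial_v}\partial_u=\nabla_{\partial_u}\nabla_{\partial_v}\partial_u-\nabla_{\nabla_{\partial_u}\partial_v}\partial_u=0$. The swapped term is different: $\nabla^2_{\partial_v,\partial_u}\partial_u=\nabla_{\partial_v}\big(\nabla_{\partial_u}\partial_u\big)-\nabla_{\nabla_{\partial_v}\partial_u}\partial_u=\nabla_{\partial_v}\big(\nabla_{\partial_u}\partial_u\big)$. This equals $R(\partial_v,\partial_u)\partial_u$, which is nonzero whenever $\mathbf{K}=f''h''/\sigma_1^4\neq 0$. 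The facts $\Gamma^k_{12}=0$ and $\nabla_{\partial_v}\partial_u=0$ do not let you commute the two covariant derivatives. So the off-diagonal part of the trace contributes $g^{12}\nabla_{\partial_v}\nabla_{\partial_u}\partial_u$, which your formula for $\overline{\Delta}\partial_u$ omits. Without it your computation produces equations that differ from $(S_1)$, so the argument does not establish the statement.

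The repair is short. One finds
\[
\nabla_{\partial_v}\nabla_{\partial_u}\partial_u=\partial_v\Gamma^1_{11}\,\partial_u+\partial_v\Gamma^2_{11}\,\partial_v+\Gamma^2_{11}\nabla_{\partial_v}\partial_v=\frac{f''h''}{\sigma_1^4}\Big(-f'h'\,\partial_u+(1+f'^2)\,\partial_v\Big).
\]
After multiplying by $g^{12}=-f'h'/\sigma_1^2$ and by $\sigma_1^6$, you must add $f'^2h'^2f''h''$ to the $\partial_u$-component and $-f'h'f''h''(1+f'^2)$ to the $\partial_v$-component.

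The first addition turns $-(1+f'^2)f'^2f''h''$ into $-f'^2f''h''(1+f'^2-h'^2)$. The second doubles the coefficient of $f'f''h''(1+f'^2)$ inside the bracket; your formula as written gives $1$, not $2$. The components then coincide exactly with the first two equations of $(S_1)$. The analogous term $g^{12}\nabla_{\partial_u}\nabla_{\partial_v}\partial_v$ must also be carried through your symmetry argument for $\partial_v$.

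Once fixed, your route is noticeably shorter than the paper's. In the paper these cross terms are hidden inside $\nabla_{e_2}\nabla_{e_2}\partial_u$: $e_2$ has a $\partial_u$-component, and the product of its two coordinate components is $-f'h'/\sigma_1^2=g^{12}$.
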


\begin{proof}
First, we compute the Laplacian of $\frac{\partial}{\partial u}$. We have
\begin{equation}\label{eq2.8}
\Delta\left(\frac{\partial}{\partial u}\right)=\nabla_{e_1}\nabla_{e_1}\frac{\partial}{\partial u}-\nabla_{\nabla_{e_1}e_1}\frac{\partial}{\partial u}+\nabla_{e_2}\nabla_{e_2}\frac{\partial}{\partial u}-\nabla_{\nabla_{e_2}e_2}\frac{\partial}{\partial u}.
\end{equation}
We compute  $\nabla_{\nabla_{e_1}e_1}\frac{\partial}{\partial u}$ and $\nabla_{\nabla_{e_2}e_2}\frac{\partial}{\partial u}$.
By using the formulas (\ref{eq2.7}), we get
\begin{align*}
\nabla_{\nabla_{e_1}e_1}\frac{\partial}{\partial u}
&=\nabla_{\left(-\frac{f'f''h{'}^2}{\sigma_2^4\sigma_1^2}\frac{\partial}{\partial u}
+\frac{h'f''}{\sigma_1^2\sigma_2^2}\frac{\partial}{\partial v}\right)}\frac{\partial}{\partial u}\\
&=-\frac{f'f''h{'}^2}{\sigma_2^4\sigma_1^2} \nabla_{\frac{\partial}{\partial u}}\frac{\partial}{\partial u}
+\frac{h'f''}{\sigma_1^2\sigma_2^2}\nabla_{\frac{\partial}{\partial v}}\frac{\partial}{\partial u}\\
&=-\frac{f'f''h{'}^2}{\sigma_2^4\sigma_1^2}\left(\frac{f'f''}{\sigma_1^2}\frac{\partial}{\partial u}
+\frac{h'f''}{\sigma_1^2}\frac{\partial}{\partial v}\right)\\
&=-\frac{f{'}^2f{''}^2h{'}^2}{\sigma_1^4\sigma_2^4}\frac{\partial}{\partial u}
-\frac{f{'}f{''}^2h{'}^3}{\sigma_1^4\sigma_2^4}\frac{\partial}{\partial v},
\end{align*}
and the following		
\begin{align*}
\nabla_{\nabla_{e_2}e_2}\frac{\partial}{\partial u}&=\nabla_{\frac{f'f''h{'}^2}{\sigma_1^2\sigma_2^4}\frac{\partial}{\partial u}}\frac{\partial}{\partial u}\\
		&=\frac{f'f''h{'}^2}{\sigma_1^2\sigma_2^4}\nabla_{\frac{\partial}{\partial u}}\frac{\partial}{\partial u}\\
		&=\frac{f'f''h{'}^2}{\sigma_1^2\sigma_2^4}\left(\frac{f'f''}{\sigma_1^2}\frac{\partial}{\partial u}+\frac{h'f''}{\sigma_1^2} \frac{\partial}{\partial v}\right)\\
		&=\frac{f{'}^2f{''}^2h{'}^2}{\sigma_1^4\sigma_2^4}\frac{\partial}{\partial u}+\frac{f{'}f{''}^2h{'}^3}{\sigma_1^4\sigma_2^4}\frac{\partial}{\partial v}.
\end{align*}
Note that, $ \nabla_{\nabla_{e_2}e_2}\frac{\partial}{\partial u}=- \nabla_{\nabla_{e_1}e_1}\frac{\partial}{\partial u}$.
We compute 	$\nabla_{e_1}\nabla_{e_1}\frac{\partial}{\partial u}$ and $\nabla_{e_2}\nabla_{e_2}\frac{\partial}{\partial u}$. We have
	\begin{align*}
	\nabla_{e_1}\frac{\partial}{\partial u}&=	\nabla_{\frac{1}{\sigma_2}\frac{\partial}{\partial u}}\frac{\partial}{\partial u}\\
	&=\frac{1}{\sigma_2}\left(\frac{f'f''}{\sigma_1^2}\frac{\partial}{\partial u}+\frac{h'f''}{\sigma_1^2} \frac{\partial}{\partial v}\right)\\
	&=\frac{f'f''}{\sigma_1^2\sigma_2}\frac{\partial}{\partial u}+\frac{h'f''}{\sigma_1^2\sigma_2} \frac{\partial}{\partial v},
	\end{align*}
and the following
	\begin{align*}
	\nabla_{e_2}\frac{\partial}{\partial u}&=	\nabla_{-\frac{f'h'}{\sigma_1 \sigma_2}\;\frac{\partial}{\partial u}+\frac{\sigma_2}{\sigma_1}\;\frac{\partial}{\partial v}}\frac{\partial}{\partial u}\\
	&=-\frac{f'h'}{\sigma_1 \sigma_2}\nabla_{\frac{\partial}{\partial u}}\frac{\partial}{\partial u}+\frac{\sigma_2}{\sigma_1}\nabla_{\frac{\partial}{\partial v}}\frac{\partial}{\partial u}\\
	&=-\frac{f'h'}{\sigma_1 \sigma_2}\left(\frac{f'f''}{\sigma_1^2}\frac{\partial}{\partial u}+\frac{h'f''}{\sigma_1^2} \frac{\partial}{\partial v}\right)\\
	&=-\frac{f{'}^2f''h'}{\sigma_1^3\sigma_2}\frac{\partial}{\partial u}-\frac{f'f''h{'}^2}{\sigma_1^3\sigma_2}\frac{\partial}{\partial v}.
	\end{align*}
Therefore, the term $\nabla_{e_1}\nabla_{e_1}\frac{\partial}{\partial u}$ is given by
	\begin{align*}
\nabla_{e_1}\nabla_{e_1}\frac{\partial}{\partial u}&=\nabla_{\frac{1}{\sigma_2}\frac{\partial}{\partial u}}\left(\frac{f'f''}{\sigma_1^2\sigma_2}\frac{\partial}{\partial u}+\frac{h'f''}{\sigma_1^2\sigma_2} \frac{\partial}{\partial v}\right)\\
	&=\frac{1}{\sigma_2}\left(\frac{\partial}{\partial u}\left(\frac{f'f''}{\sigma_1^2\sigma_2}\right)\frac{\partial}{\partial u}+\frac{f'f''}{\sigma_1^2\sigma_2}\nabla_{\frac{\partial}{\partial u}}\frac{\partial}{\partial u}+\frac{\partial}{\partial u}\left(\frac{h'f''}{\sigma_1^2\sigma_2}\right)\frac{\partial}{\partial v}+\frac{h'f''}{\sigma_1^2\sigma_2}\nabla_{\frac{\partial}{\partial u}}\frac{\partial}{\partial v}\right)\\
	 &=\frac{1}{\sigma_2}\left(\frac{f{''}^2\sigma_1^2+f'f'''\sigma_1^2\sigma_2^2-2f{'}^2f{''}^2\sigma_2^2}{\sigma_1^4\sigma_2^3}\frac{\partial}{\partial u}+\frac{f'f''}{\sigma_1^2\sigma_2}\left(\frac{f'f''}{\sigma_1^2}\frac{\partial}{\partial u}+\frac{h'f''}{\sigma_1^2}\frac{\partial}{\partial v}\right)\right. \notag\\
	&\quad \left.\hspace{0.75cm}+\frac{f'''h'\sigma_1^2\sigma_2^2-f'f{''}^2h'\sigma_1^2-2f'f{''}^2h'\sigma_2^2}{\sigma_1^4\sigma_2^3}\frac{\partial}{\partial v}\right)\\ &=\frac{1}{\sigma_2}\left(\frac{f{''}^2\sigma_1^2+f'f'''\sigma_1^2\sigma_2^2-2f{'}^2f{''}^2\sigma_2^2}{\sigma_1^4\sigma_2^3}\frac{\partial}{\partial u}+\frac{f{'}^2f{''}^2\sigma_2^2}{\sigma_1^4\sigma_2^3}\frac{\partial}{\partial u}+\frac{f'f{''}^2h'\sigma_2^2}{\sigma_1^4\sigma_2^3}\frac{\partial}{\partial v}\right. \notag\\
	&\quad \left.\hspace{0.75cm}+\frac{f'''h'\sigma_1^2\sigma_2^2-f'f{''}^2h'\sigma_1^2-2f'f{''}^2h'\sigma_2^2}{\sigma_1^4\sigma_2^3}\frac{\partial}{\partial v}\right)\\
	&=\frac{f{''}^2\sigma_1^2+f'f'''\sigma_1^2\sigma_2^2-f{'}^2f{''}^2\sigma_2^2}{\sigma_1^4\sigma_2^4}\frac{\partial}{\partial u}+\frac{f'''h'\sigma_1^2\sigma_2^2-f'f{''}^2h'\sigma_1^2-f'f{''}^2h'\sigma_2^2}{\sigma_1^4\sigma_2^4}\frac{\partial}{\partial v},
	\end{align*}
and the term $\nabla_{e_2}\nabla_{e_2}\frac{\partial}{\partial u}$ is given by
	\begin{align*}
\nabla_{e_2}\nabla_{e_2}\frac{\partial}{\partial u}&=\nabla_{ -\frac{f'h'}{\sigma_1 \sigma_2}\;\frac{\partial}{\partial u}+\frac{\sigma_2}{\sigma_1}\;\frac{\partial}{\partial v}}\left(-\frac{f{'}^2f''h'}{\sigma_1^3\sigma_2}\frac{\partial}{\partial u}-\frac{f'f''h{'}^2}{\sigma_1^3\sigma_2}\frac{\partial}{\partial v}\right)\\
		&=-\frac{f'h'}{\sigma_1 \sigma_2}\nabla_{\frac{\partial}{\partial u}}\left(-\frac{f{'}^2f''h'}{\sigma_1^3\sigma_2}\frac{\partial}{\partial u}-\frac{f'f''h{'}^2}{\sigma_1^3\sigma_2}\frac{\partial}{\partial v}\right)\notag\\
		&\quad \hspace{0.25cm}+\frac{\sigma_2}{\sigma_1}\nabla_{\frac{\partial}{\partial v}}\left(-\frac{f{'}^2f''h'}{\sigma_1^3\sigma_2}\frac{\partial}{\partial u}-\frac{f'f''h{'}^2}{\sigma_1^3\sigma_2}\frac{\partial}{\partial v}\right)\\
		&=-\frac{f'h'}{\sigma_1 \sigma_2}\left(-\frac{\partial}{\partial u}\left(\frac{f{'}^2f''h'}{\sigma_1^3\sigma_2}\right)\frac{\partial}{\partial u}-\frac{f{'}^2f''h'}{\sigma_1^3\sigma_2}\nabla_{\frac{\partial}{\partial u}}\frac{\partial}{\partial u}-\frac{\partial}{\partial u}\left(\frac{f{'}f''h{'}^2}{\sigma_1^3\sigma_2}\right)\frac{\partial}{\partial v}\right)\notag\\
		&\quad \hspace{0.25cm}+\frac{\sigma_2}{\sigma_1}\left(-\frac{\partial}{\partial v}\left(\frac{f{'}^2f''h'}{\sigma_1^3\sigma_2}\right)\frac{\partial}{\partial u}-\frac{\partial}{\partial v}\left(\frac{f{'}f''h{'}^2}{\sigma_1^3\sigma_2}\right)\frac{\partial}{\partial v}-\frac{f{'}f''h{'}^2}{\sigma_1^3\sigma_2}\nabla_{\frac{\partial}{\partial v}}\frac{\partial}{\partial v}\right)\\
		&=-\frac{f'h'}{\sigma_1 \sigma_2}\left(\frac{-2f'h'f{''}^2\sigma_1^2\sigma_2^2-f{'}^2f'''h'\sigma_1^2\sigma_2^2+3f{'}^3f{''}^2h'\sigma_2^2+f{'}^3f{''}^2h'\sigma_1^2}{\sigma_1^5\sigma_2^3}\frac{\partial}{\partial u}\right.\notag\\
		&\quad \left.-\frac{f{'}^2f''h'}{\sigma_1^3\sigma_2}\left(\frac{f'f''}{\sigma_1^2}\frac{\partial}{\partial u}+\frac{f''h'}{\sigma_1^2}\frac{\partial}{\partial v}\right)\right.\notag\\
		&\quad \left.+\frac{-f{''}^2h{'}^2\sigma_1^2-f'f'''h{'}^2\sigma_1^2\sigma_2^2+3f{'}^2f{''}^2h{'}^2\sigma_2^2}{\sigma_1^5\sigma_2^3}\frac{\partial}{\partial v}\right)\notag\\
		&\quad \hspace{0.05cm}+\frac{\sigma_2}{\sigma_1}\left(\frac{-f{'}^2f''h''\sigma_1^2+3f{'}^2f''h{'}^2h''}{\sigma_1^5\sigma_2}\frac{\partial}{\partial u}+\frac{-2f'f''h'h''\sigma_1^2+3f'f''h{'}^3h''}{\sigma_1^5\sigma_2}\frac{\partial}{\partial v}\right.\notag\\
		&\quad \left.-\frac{f'f''h{'}^2}{\sigma_1^3\sigma_2}\left(\frac{f'h''}{\sigma_1^2}\frac{\partial}{\partial u}+\frac{h'h''}{\sigma_1^2}\frac{\partial}{\partial v}\right)\right)\\ &=\frac{1}{\sigma_1^6\sigma_2^4}\left(2f{'}^2f{''}^2h{'}^2\sigma_1^2\sigma_2^2+f{'}^3f'''h{'}^2\sigma_1^2\sigma_2^2-2f{'}^4f{''}^2h{'}^2\sigma_2^2-f{'}^4f{''}^2h{'}^2\sigma_1^2\right.\notag\\
		&\quad \left.\hspace{1.50cm}-f{'}^2f''h''\sigma_1^2\sigma_2^4+2f{'}^2f''h{'}^2h''\sigma_2^4\right)\frac{\partial}{\partial u}\notag\\
		&\quad +\frac{1}{\sigma_1^6\sigma_2^4} \left(f'f{''}^2h{'}^3\sigma_1^2+f{'}^2f'''h{'}^3\sigma_1^2\sigma_2^2-2f{'}^3f{''}^2h{'}^3\sigma_2^2-2f'f''h'h''\sigma_1^2\sigma_2^4\right.\notag\\
		&\quad \left.\hspace{1.90cm}+2f'f''h{'}^3h''\sigma_2^4\right)\frac{\partial}{\partial v}.
	\end{align*}
Replacing these formulas into (\ref{eq2.8}), we obtain
\begin{align*}
\Delta\left(\frac{\partial}{\partial u}\right)&=\nabla_{e_1}\nabla_{e_1}\frac{\partial}{\partial u}+\nabla_{e_2}\nabla_{e_2}\frac{\partial}{\partial u}\\
&=\frac{1}{\sigma_1^6\sigma_2^4}\left(f{''}^2\sigma_1^4+f'f'''\sigma_1^4\sigma_2^2-f{'}^2f{''}^2\sigma_1^2\sigma_2^2+2f{'}^2f{''}^2h{'}^2\sigma_1^2\sigma_2^2\right.\notag\\
&\quad \left.\hspace{1.40cm}+f{'}^3f'''h{'}^2\sigma_1^2\sigma_2^2-2f{'}^4f{''}^2h{'}^2\sigma_2^2-f{'}^4f{''}^2h{'}^2\sigma_1^2-f{'}^2f''h''\sigma_1^2\sigma_2^4\right.\notag\\
&\quad \left.\hspace{1.40cm}+2f{'}^2f''h{'}^2h''\sigma_2^4\right)\frac{\partial}{\partial u}
\notag\\
&\quad
+\frac{1}{\sigma_1^6\sigma_2^4}\left(f'''h'\sigma_1^4\sigma_2^2-f'f{''}^2h'\sigma_1^4-f'f{''}^2h'\sigma_1^2\sigma_2^2+f'f{''}^2h{'}^3\sigma_1^2\right.\notag\\
&\quad \left.\hspace{1.80cm}+f{'}^2f'''h{'}^3\sigma_1^2\sigma_2^2-2f{'}^3f{''}^2h{'}^3\sigma_2^2-2f'f''h'h''\sigma_1^2\sigma_2^4\right.\notag\\
&\quad \left.\hspace{1.80cm}+2f'f''h{'}^3h''\sigma_2^4\right)\frac{\partial}{\partial v}\\
&=\frac{1}{\sigma_1^6}\left(-f{'}^4f''h''+f{'}^3f'''h{'}^2+f{'}^3f'''+f{'}^2f''h{'}^2h''-f{'}^2f{''}^2-f{'}^2f{''}^2h{'}^2\right.\notag\\
&\quad \left.\hspace{0.9cm}-f{'}^2f''h''+f'f'''h{'}^4+2f'f'''h{'}^2+f'f'''+f{''}^2+2f{''}^2h{'}^2+f{''}^2h{'}^4\right)\frac{\partial}{\partial u}
\notag\\
&\quad+\frac{h'}{\sigma_1^6} \left(-2f{'}^3f''h''+f{'}^2f'''h{'}^2+f{'}^2f'''-2f'f{''}^2-2f'f{''}^2h{'}^2-2f'f''h''\right.\notag\\
&\quad \left.\hspace{1.3cm}+f'''h{'}^4+2f'''h{'}^2+f'''\right)\frac{\partial}{\partial v}\\
&=\frac{1}{\sigma_1^6} \left(f{''}^2\left(1+h{'}^2\right)\left(1+h{'}^2-f{'}^2\right)+f{'}f{'''}\left(1+h{'}^2\right)\left(1+h{'}^2+f{'}^2\right)\right.\notag\\
&\quad \left.\hspace{0.8cm}-f{'}^2f{''}h{''}\left(1-h{'}^2+f{'}^2\right)\right)\frac{\partial}{\partial u}\notag\\
&\quad +\frac{h{'}}{\sigma_1^6}\left(f{'''}\left(1+h{'}^2\right)\left(1+h{'}^2+f{'}^2\right)
-2f{'}f{''}\left[h{''}\left(1+f{'}^2\right)+f{''}\left(1+h{'}^2\right)\right]\right)\frac{\partial}{\partial v}.
\end{align*}
We compute now $\Delta\left(\frac{\partial}{\partial v}\right)$. We have
\begin{equation}\label{eq2.9}
\Delta\left(\frac{\partial}{\partial v}\right)=\nabla_{e_1}\nabla_{e_1}\frac{\partial}{\partial v}-\nabla_{\nabla_{e_1}e_1}\frac{\partial}{\partial v}+\nabla_{e_2}\nabla_{e_2}\frac{\partial}{\partial v}-\nabla_{\nabla_{e_2}e_2}\frac{\partial}{\partial v}.
\end{equation}
Calculate $\nabla_{\nabla_{e_1}e_1}\frac{\partial}{\partial v}$ and $\nabla_{\nabla_{e_2}e_2}\frac{\partial}{\partial v}$.
By using the formulas (\ref{eq2.7}), we find that
\begin{align*}
	\nabla_{\nabla_{e_1}e_1}\frac{\partial}{\partial v}&=\nabla_{\left(\frac{-f'f''h{'}^2}{\sigma_1^2\sigma_2^4}\frac{\partial}{\partial u}+\frac{f''h'}{\sigma_1^2\sigma_2^2}\frac{\partial}{\partial v}\right)}\frac{\partial}{\partial v}\\
	&=\frac{f''h'}{\sigma_1^2\sigma_2^2}\nabla_{\frac{\partial}{\partial v}}\frac{\partial}{\partial v}\\
	&=\frac{f'f''h'h''}{\sigma_1^4\sigma_2^2}\frac{\partial}{\partial u}+\frac{f''h{'}^2h''}{\sigma_1^4\sigma_2^2}\frac{\partial}{\partial v},
\end{align*}
and the following
\begin{align*}\nabla_{\nabla_{e_2}e_2}\frac{\partial}{\partial v}&=\nabla_{\frac{f'f''h{'}^2}{\sigma_1^2\sigma_2^4}\frac{\partial}{\partial u}}\frac{\partial}{\partial v}=0.
\end{align*}
We compute $\nabla_{e_1}\nabla_{e_1}\frac{\partial}{\partial v}$ and $\nabla_{e_2}\nabla_{e_2}\frac{\partial}{\partial v}$. We have
	\begin{align*}
	\nabla_{e_1}\frac{\partial}{\partial v}&=	\nabla_{\frac{1}{\sigma_2}\frac{\partial}{\partial u}}\frac{\partial}{\partial v}=0,
	\end{align*}
and the following
	\begin{align*}
	\nabla_{e_2}\frac{\partial}{\partial v}&=\nabla_{\left(-\frac{f'h'}{\sigma_1\sigma_2}\frac{\partial}{\partial u}+\frac{\sigma_2}{\sigma_1}\frac{\partial}{\partial v}\right)}\frac{\partial}{\partial v}\\
	&=\frac{\sigma_2}{\sigma_1}\nabla_{\frac{\partial}{\partial v}}\frac{\partial}{\partial v}\\
	&=\frac{f'h''\sigma_2}{\sigma_1^3}\frac{\partial}{\partial u}+\frac{h'h''\sigma_2}{\sigma_1^3}\frac{\partial}{\partial v}.
	\end{align*}
Therefore $\nabla_{e_1}\nabla_{e_1}\frac{\partial}{\partial v}=0$, and we compute
	\begin{align*}
		\nabla_{e_2}\nabla_{e_2}\frac{\partial}{\partial v}&=\nabla_{\left(-\frac{f'h'}{\sigma_1 \sigma_2}\;\frac{\partial}{\partial u}+\frac{\sigma_2}{\sigma_1}\;\frac{\partial}{\partial v}\right)}\left(\frac{f'h''\sigma_2}{\sigma_1^3}\frac{\partial}{\partial u}+\frac{h'h''\sigma_2}{\sigma_1^3}\frac{\partial}{\partial v}\right)\\
		&=-\frac{f'h'}{\sigma_1 \sigma_2}\nabla_{\frac{\partial}{\partial u}}\left(\frac{f'h''\sigma_2}{\sigma_1^3}\frac{\partial}{\partial u}+\frac{h'h''\sigma_2}{\sigma_1^3}\frac{\partial}{\partial v}\right)+\frac{\sigma_2}{\sigma_1}\nabla_{\frac{\partial}{\partial v}}\left(\frac{f'h''\sigma_2}{\sigma_1^3}\frac{\partial}{\partial u}+\frac{h'h''\sigma_2}{\sigma_1^3}\frac{\partial}{\partial v}\right)\\
		&=-\frac{f'h'}{\sigma_1 \sigma_2}\left(\frac{\partial}{\partial u}\left(\frac{f'h''\sigma_2}{\sigma_1^3}\right)\frac{\partial}{\partial u}+\frac{f'h''\sigma_2}{\sigma_1^3}\nabla_{\frac{\partial}{\partial u}}\frac{\partial}{\partial u}+\frac{\partial}{\partial u}\left(\frac{h'h''\sigma_2}{\sigma_1^3}\right)\frac{\partial}{\partial v}\right)\notag\\
		&\quad+\frac{\sigma_2}{\sigma_1}\left(\frac{\partial}{\partial v}\left(\frac{f'h''\sigma_2}{\sigma_1^3}\right)\frac{\partial}{\partial u}+\frac{\partial}{\partial v}\left(\frac{h'h''\sigma_2}{\sigma_1^3}\right)\frac{\partial}{\partial v}+\frac{h'h''\sigma_2}{\sigma_1^3}\nabla_{\frac{\partial}{\partial v}}\frac{\partial}{\partial v}\right)\\
		&=-\frac{f'h'}{\sigma_1 \sigma_2}\left(\frac{f''h''\sigma_1^2\sigma_2^2+f{'}^2f''h''\sigma_1^2-3f{'}^2f''h''\sigma_2^2}{\sigma_1^5\sigma_2}\frac{\partial}{\partial u}+\frac{f{'}^2f''h''\sigma_2}{\sigma_1^5}\frac{\partial}{\partial u}\right.\notag\\
		&\quad \left.\hspace{1.70cm}+\frac{f'f''h'h''\sigma_2}{\sigma_1^5}\frac{\partial}{\partial v}+\frac{f'f''h'h''\sigma_1^2-3f'f''h'h''\sigma_2^2}{\sigma_1^5\sigma_2}\frac{\partial}{\partial v}\right)\notag\\
		&\quad +\frac{\sigma_2}{\sigma_1}\left(\frac{f'h'''\sigma_1^2\sigma_2-3f'h'h{''}^2\sigma_2}{\sigma_1^5}\frac{\partial}{\partial u}+\frac{h{''}^2\sigma_1^2\sigma_2+h'h'''\sigma_1^2\sigma_2-3h{'}^2h{''}^2\sigma_2}{\sigma_1^5}\frac{\partial}{\partial v}\right.\notag\\
		&\quad \left.\hspace{1.40cm}+\frac{f'h'h{''}^2\sigma_2}{\sigma_1^5}\frac{\partial}{\partial u}+\frac{h{'}^2h{''}^2\sigma_2}{\sigma_1^5}\frac{\partial}{\partial v}\right)\\ &=\frac{1}{\sigma_1^6\sigma_2^2}\left(-f'f''h'h''\sigma_1^2\sigma_2^2-f{'}^3f''h'h''\sigma_1^2+2f{'}^3f''h'h''\sigma_2^2+f'h'''\sigma_1^2\sigma_2^4\right.\notag\\
		&\quad \left.\hspace{1.4cm}-2f'h'h{''}^2\sigma_2^4\right)\frac{\partial}{\partial u}\notag\\
		&\quad 	 +\frac{1}{\sigma_1^6\sigma_2^2}\left(2f{'}^2f''h{'}^2h''\sigma_2^2-f{'}^2f''h{'}^2h''\sigma_1^2+h{''}^2\sigma_1^2\sigma_2^4+h'h'''\sigma_1^2\sigma_2^4\right.\notag\\
		&\quad \left.\hspace{1.9cm}-2h{'}^2h{''}^2\sigma_2^4\right)	\frac{\partial}{\partial v}.
	\end{align*}
By replacing these formulas into (\ref{eq2.9}), we get
	\begin{align*}
	\Delta\left(\frac{\partial}{\partial v}\right)&=\nabla_{e_2}\nabla_{e_2}\frac{\partial}{\partial v}-\nabla_{\nabla_{e_1}e_1}\frac{\partial}{\partial v}\\ &=\frac{1}{\sigma_1^6\sigma_2^2}\left(-f'f''h'h''\sigma_1^2\sigma_2^2-f{'}^3f''h'h''\sigma_1^2+2f{'}^3f''h'h''\sigma_2^2+f'h'''\sigma_1^2\sigma_2^4\right.\notag\\
	&\quad \left.\hspace{1.4cm}-2f'h'h{''}^2\sigma_2^4-f'f''h'h''\sigma_1^2\right)\frac{\partial}{\partial u}\notag\\
	&\quad 	 +\frac{1}{\sigma_1^6\sigma_2^2}\left(2f{'}^2f''h{'}^2h''\sigma_2^2-f{'}^2f''h{'}^2h''\sigma_1^2+h{''}^2\sigma_1^2\sigma_2^4+h'h'''\sigma_1^2\sigma_2^4\right.\notag\\
	&\quad \left.\hspace{1.4cm}-2h{'}^2h{''}^2\sigma_2^4-f''h{'}^2h''\sigma_1^2\right)	\frac{\partial}{\partial v}.
	\end{align*}
Note that
	\begin{align*}
	f{'}^3f''h'h''\sigma_1^2+f'f''h'h''\sigma_1^2&=f'f''h'h''\sigma_1^2(1+f{'}^2)\\&=f'f''h'h''\sigma_1^2\sigma_2^2,\\
	f{'}^2f''h{'}^2h''\sigma_1^2+f''h{'}^2h''\sigma_1^2&=f''h{'}^2h''\sigma_1^2(1+f{'}^2)\\&=f''h{'}^2h''\sigma_1^2\sigma_2^2.
	\end{align*}
We conclude that
\begin{align*}
		\Delta\left(\frac{\partial}{\partial v}\right)&=\frac{1}{\sigma_1^6}
\left(-2f'f''h'h''\sigma_1^2+2f{'}^3f''h'h''+f'h'''\sigma_1^2\sigma_2^2-2f'h'h{''}^2\sigma_2^2\right)\frac{\partial}{\partial u}
\notag\\
&\quad+\frac{1}{\sigma_1^6}\left(2f{'}^2f''h{'}^2h''+h{''}^2\sigma_1^2\sigma_2^2+h'h'''\sigma_1^2\sigma_2^2-2h{'}^2h{''}^2\sigma_2^2-f''h{'}^2h''\sigma_1^2\right)\frac{\partial}{\partial v}\\
&=\frac{1}{\sigma_1^6}\left(-2f'f''h'h''-2f{'}^3f''h'h''-2f'f''h{'}^3h''+2f{'}^3f''h'h''+f'h'''+2f{'}^3h'''\right.\notag\\
&\quad \left.\hspace{0.9cm}+f'h{'}^2h'''+f{'}^5h'''+f{'}^3h{'}^2h'''-2f'h'h{''}^2-2f{'}^3h'h{''}^2\right)\frac{\partial}{\partial u}\notag\\
&\quad +\frac{1}{\sigma_1^6}\left(2f{'}^2f''h{'}^2h''+h{''}^2+2f{'}^2h{''}^2+h{'}^2h{''}^2+f{'}^4h{''}^2+f{'}^2h{'}^2h{''}^2+h'h'''\right.\notag\\
&\quad \left.\hspace{1.25cm}+2f{'}^2h'h'''+h{'}^3h'''+f{'}^4h'h'''+f{'}^2h{'}^3h'''-2h{'}^2h{''}^2-2f{'}^2h{'}^2h{''}^2\right.\notag\\
&\quad \left.\hspace{1.25cm}-f''h{'}^2h''-f{'}^2f''h{'}^2h''-f''h{'}^4h''\right)\frac{\partial}{\partial v},
\end{align*}
therefore
\begin{align*}
\Delta\left(\frac{\partial}{\partial v}\right)
&=\frac{f'}{\sigma_1^6}\left(-2f''h'h''-2f''h{'}^3h''+h'''+2f{'}^2h'''+h{'}^2h'''+f{'}^4h'''+f{'}^2h{'}^2h'''\right.\notag\\
&\quad \left.\hspace{0.9cm}-2h'h{''}^2-2f{'}^2h'h{''}^2\right)\frac{\partial}{\partial u}\notag\\
&\quad +\frac{1}{\sigma_1^6}\left(f{'}^2f''h{'}^2h''+h{''}^2+2f{'}^2h{''}^2-h{'}^2h{''}^2+f{'}^4h{''}^2-f{'}^2h{'}^2h{''}^2+h'h'''\right.\notag\\
&\quad \left.\hspace{1.3cm}+2f{'}^2h'h'''+h{'}^3h'''+f{'}^4h'h'''+f{'}^2h{'}^3h'''-f''h{'}^2h''-f''h{'}^4h''\right)\frac{\partial}{\partial v}\\
&=\frac{f{'}}{\sigma_1^6}\left(h{'''}\left(1+f{'}^2\right)\left(1+f{'}^2+h{'}^2\right)-2h{'}h{''}\left[f{''}\left(1+h{'}^2\right)+h{''}\left(1+f{'}^2\right)\right]\right)\frac{\partial}{\partial u}\notag\\
& \quad
+\frac{1}{\sigma_1^6} \left(h{''}^2\left(1+f{'}^2\right)\left(1+f{'}^2-h{'}^2\right)+h{'}h{'''}\left(1+f{'}^2\right)\left(1+f{'}^2+h{'}^2\right)\right.\notag\\
&\quad \left.\hspace{0.8cm}-h{'}^2h{''}f{''}\left(1-f{'}^2+h{'}^2\right)\right)\frac{\partial}{\partial v}.
\end{align*}
The proof is complete.
\end{proof}

As a consequence of the system $(S_1)$ obtained in the previous theorem, we are able to
completely classify translation surfaces whose coordinate vector fields are harmonic
sections.

\begin{theorem}\label{thm1}
The basis vectors fields $\frac{\partial}{\partial u}$ and $\frac{\partial}{\partial v}$ are harmonic sections on $(M^2,g)$ if and only if
\begin{enumerate}
		\item
    $\begin{cases}
	f(u)=a_0u+a_1\\
	h(v)=b_0v+b_1
	\end{cases}$ where $a_0,a_1,b_0,b_1\in \mathbb{R}$.\\\\
In this case the translation surface is plane, or\\
	\item $\begin{cases} f(u)=\pm \left[c_0\sqrt{\alpha\exp\left(\frac{2u}{c_0} \right)-1}-c_0\arctan\left(\sqrt{\alpha\exp\left(\frac{2 u}{c_0}\right)-1}\right)+c_1\right]\\h(v)=k_0 \end{cases}$\\
for some constants $c_0\in \mathbb{R}^*\;,\;k_0,c_1\in \mathbb{R}\;,\;\alpha \in \mathbb{R}^*_+$
such that $\alpha\exp\left(\frac{2u}{c_0} \right)-1>0$.
\end{enumerate}
\end{theorem}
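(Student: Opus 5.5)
The plan is to solve the system $(S_1)$ of the previous theorem directly, exploiting the fact that $f$ depends only on $u$ and $h$ only on $v$, so that every equation of $(S_1)$ is an identity between functions of separate variables. We work on a connected parameter domain $I\times J$ and distinguish cases according to whether $f'$ and $h'$ vanish identically. The trivial case $f'\equiv 0$, $h'\equiv 0$ is contained in item 1. Conversely, item 1 satisfies $(S_1)$ trivially, and item 2 will be checked by the computation in the last case, so only necessity needs work.

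\emph{Case $h'\not\equiv 0$ and $f'\not\equiv 0$.} On the open sets where $h'\neq 0$ (resp.\ $f'\neq 0$), the second (resp.\ third) equation loses its factor $h'$ (resp.\ $f'$). Expanding the second equation in the $v$-dependent quantities gives
\[
\bigl[f'''(1+f'^2)-2f'f''^2\bigr]+\bigl[f'''(2+f'^2)-2f'f''^2\bigr]h'^2+f'''h'^4-2f'f''(1+f'^2)\,h''=0 .
\]
If the functions $1,h'^2,h'^4,h''$ are linearly independent on $J$, all four coefficients vanish. This gives $f'''=0$ and $f'f''=0$, hence $f''=0$, and symmetrically $h''=0$.

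The degenerate subcase is when $h''$ satisfies a relation $h''=P(h'^2)$ with $P$ a polynomial of degree at most $2$ with constant coefficients. I expect this to be the main obstacle. There I would substitute $h''=P(h'^2)$ and $h'''=P'(h'^2)\,2h'h''$ back into the first and fourth equations and separate variables again, aiming to force $P\equiv 0$. The simplest instance, $h''=0$ with $h'=b_0\neq 0$, already shows how this closes. Equation two becomes $f'''(1+b_0^2+f'^2)=2f'f''^2$. Multiplying by $f'$ and inserting into the first equation yields $f''^2(1+b_0^2)(1+b_0^2+f'^2)=0$, so $f''=0$. In every subcase the conclusion should be that both $f$ and $h$ are affine, which is item 1, and the surface is then a plane.

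\emph{Case $h'\equiv 0$ (so $h=k_0$).} The second, third and fourth equations hold trivially, and the first reduces, with $p=f'$, to
\[
pp''(1+p^2)+p'^2(1-p^2)=0 .
\]
The key observation is that
\[
\frac{d}{du}\Bigl(\frac{pp'}{1+p^2}\Bigr)=\frac{pp''(1+p^2)+p'^2(1-p^2)}{(1+p^2)^2},
\]
so the equation is equivalent to $\bigl(\ln(1+p^2)\bigr)''=0$. Hence $1+f'^2=\alpha e^{2u/c_0}$ with $\alpha>0$, or else $f'$ is constant, which falls back into item 1. Therefore $f'=\pm\sqrt{\alpha e^{2u/c_0}-1}$. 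Substituting $s=\sqrt{\alpha e^{2u/c_0}-1}$ gives $du=c_0\,s\,ds/(1+s^2)$, and integrating yields $f=\pm\bigl[c_0(s-\arctan s)+c_1\bigr]$. This is exactly item 2, and reversing the computation shows that item 2 does satisfy $(S_1)$.

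\emph{Case $f'\equiv 0$, $h'\not\equiv 0$.} The fourth equation gives the analogous ODE for $h$. By symmetry of $(S_1)$ under $f\leftrightarrow h$, this produces the mirror family (with $f$ constant), which should be recorded alongside item 2.
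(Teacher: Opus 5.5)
Your necessity argument in the case $f'\not\equiv0$, $h'\not\equiv0$ has a gap exactly where you expected one. In the degenerate subcase $h''=P(h'^2)$, $\deg P\le 2$, you treat only $P\equiv0$ and then assert that every subcase ends with $f,h$ affine. That assertion is the whole content of the ``only if'' direction in this case, so as written the proof does not establish it.

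There is also a slip in the generic subcase. Once you have $f''=0$, $f'$ is a nonzero constant, so $1,f'^2,f'^4,f''$ are linearly dependent, and the ``symmetric'' argument giving $h''=0$ does not apply. You need instead the mirror image of your $h''=0$ computation. With $f'=a_0\neq0$, the third equation gives $h'''(1+a_0^2+h'^2)=2h'h''^2$. Inserting $h'$ times this into the fourth equation gives $(1+a_0^2)h''^2(1+a_0^2+h'^2)=0$, hence $h''=0$.

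The fix is already in your ``simplest instance'': that elimination does not need $h''=0$, and applying it in general is exactly what the paper does. Where $h'\neq0$, multiply the second equation (with the factor $h'$ removed) by $f'$, and substitute for $f'f'''(1+h'^2)(1+f'^2+h'^2)$ in the first equation. Everything factors as
\[
(1+f'^2+h'^2)\,f''\bigl[f''(1+h'^2)+f'^2h''\bigr]=0 .
\]
This replaces your linear-independence dichotomy by two clean branches.

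\emph{Branch $f''\neq0$ on some open interval.} Then $f'\neq0$ there, since otherwise the bracket forces $f''=0$. Separating variables gives $f''/f'^2=-h''/(1+h'^2)=a_0\neq0$. Now insert $f'^2=f''/a_0$, $f'''=2a_0f'f''$ and $1+h'^2=-h''/a_0$ into the second equation; it reduces to $f''-h''=-a_0$. So $f''$ is constant, hence $f'^2=f''/a_0$ is constant and $f''=0$, a contradiction.

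\emph{Branch $f''\equiv0$.} Then $f'=a_0\neq0$, and the computation in the first paragraph gives $h''=0$.

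The rest of your proposal is sound. Reducing the case $h'\equiv0$ to $\bigl(\ln(1+f'^2)\bigr)''=0$ is correct and more transparent than the paper's bare ``solving equation''. Your remark about the mirror family is also right. The paper's own proof produces it in its Case 2.1.1 ($f$ constant, $h$ of the $\arctan$ form), but the printed statement omits it. So the ``only if'' direction holds only once that family is added.
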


\begin{proof}
Note that, the second equation of $(S_1)$ is equivalent to
$$h{'}=0\;\; \hbox{ or } \;\; f{'''}\left(1+h{'}^2\right)\left(1+f{'}^2+h{'}^2\right)-2f{'}f{''}\left(h{''}\left(1+f{'}^2\right)+f{''}\left(1+h{'}^2\right)\right)=0.$$
\textbf{Case 1:} If $h{'}=0$, i.e., $h(v)=k_0$ for some constant $k_0\in \mathbb{R}$. Implies
\begin{equation*}
 \begin{cases}
\Delta\left(\frac{\partial}{\partial u}\right)=\left(f{''}^2\left(1-f{'}^2\right)+f{'}f{'''}\left(1+f{'}^2\right)\right)\frac{\partial}{\partial u}, \vspace{0.3cm}\\
\Delta\left(\frac{\partial}{\partial v}\right)=0.
\end{cases}
\end{equation*}
Thus, the basic vector fields are harmonic if and only if
\begin{equation}
f{''}^2\left(1-f{'}^2\right)+f{'}f{'''}\left(1+f{'}^2\right)=0.\label{*}
\end{equation}
\textbf{Case 1.1:} If  $f{'}=0$ or $f{''}=0$, i.e., $f(u)=k_1\,u+k_2$ where $k_1,k_2\in \mathbb{R}$, we get
$$ \begin{cases}
\Delta\left(\frac{\partial}{\partial u}\right)=0,\\
\Delta\left(\frac{\partial}{\partial v}\right)=0.	
\end{cases}$$
\textbf{Case 1.2:} If $f{'}\neq 0$ and $f{''}\neq 0$, solving equation (\ref{*}), we find that
$$ f(u)=\pm \left[c_0\sqrt{\alpha\exp\left(\frac{2u}{c_0} \right)-1}
-c_0\,\arctan\left(\sqrt{\alpha\exp\left(\frac{2 u}{c_0}\right)-1}\right)+c_1\right],$$
for some constants $c_0\in \mathbb{R}^*$, $c_1\in \mathbb{R}$, and $\alpha \in \mathbb{R}^*_+$ such that $\alpha\exp\left(\frac{2u}{c_0} \right)-1>0$.\\
\textbf{Case 2:} If $h{'}\neq0$ and we assume that $$f{'''}\left(1+h{'}^2\right)\left(1+f{'}^2+h{'}^2\right)-2f{'}f{''}\left(h{''}\left(1+f{'}^2\right)+f{''}\left(1+h{'}^2\right)\right)=0,$$
That is, we have
\begin{equation}\label{a}
f{'''}\left(1+h{'}^2\right)\left(1+f{'}^2+h{'}^2\right)=2f{'}f{''}\left(h{''}\left(1+f{'}^2\right)+f{''}\left(1+h{'}^2\right)\right).
\end{equation}
By substituting equation (\ref{a}) into the first equation of $(S_1)$, we obtain
\begin{eqnarray}\label{b}
&&f{''}^2\left(1+h{'}^2\right)\left(1+h{'}^2-f{'}^2\right)+2f{'}^2f{''}\left(h{''}\left(1+f{'}^2\right)+f{''}\left(1+h{'}^2\right)\right)\\
&&\nonumber-f{'}^2h{''}f{''}\left(1+f{'}^2-h{'}^2\right)=0.
\end{eqnarray}
By simple simplification, the equation (\ref{b}) is equivalent to
\begin{align*}
\left(1+h{'}^2+f{'}^2\right)\left(f{''}^2\left(1+h{'}^2\right)+f{'}^2f{''}h{''}\right)=0.
\end{align*}
Since $1+h{'}^2+f{'}^2> 1$, we obtain
\begin{equation}\label{c}
f{''}\left[f{''}\left(1+h{'}^2\right)+f{'}^2h{''}\right]=0.
\end{equation}
\textbf{Case 2.1:} If $f{''}=0$ implies $f(u)=a_0 u+a_1$ with $a_0,a_1\in \mathbb{R}$, we get
\begin{align*}
\begin{cases}
\Delta\left(\frac{\partial}{\partial u}\right)=0,\vspace{0.3cm}\\
\Delta\left(\frac{\partial}{\partial v}\right)=f{'}\left[ h{'''}\left(1+f{'}^2\right)\left(1+f{'}^2+h{'}^2\right)
-2h{'}h{''}^2\left(1+f{'}^2\right)\right]\frac{\partial}{\partial u}\vspace{0.3cm}\\
\hspace{2cm}+\left[h{''}^2\left(1+f{'}^2\right)\left(1+f{'}^2-h{'}^2\right)+h{'}h{'''}\left(1+f{'}^2\right)\left(1+f{'}^2
+h{'}^2\right)\right]\frac{\partial}{\partial v}.
\end{cases}
\end{align*}
\textbf{Case 2.1.1:} If $f{'}=0$, i.e., $a_0=0$, we deduce
\begin{equation}
\begin{cases}
\Delta\left(\frac{\partial}{\partial u}\right)=0\vspace{0.3cm}\\
\Delta\left(\frac{\partial}{\partial v}\right)=0	
\end{cases} \;\hbox{ iff }\;\;\; h{''}^2\left(1-h{'}^2\right)+h{'}h{'''}\left(1+h{'}^2\right)=0.\label{d}
\end{equation}
By solving the equation (\ref{d}), we find that $h{''}= 0$, i.e., $h(v)=k_0\,v+k_1$ with $k_0\in \mathbb{R}^*$ and $k_1\in \mathbb{R}$, or
\begin{equation*}
h(v)=\pm \left[c_0\sqrt{\alpha\exp\left(\frac{2v}{c_0} \right)-1}-c_0\arctan\left(\sqrt{\alpha\exp\left(\frac{2 v}{c_0}\right)-1}\right)+c_1\right],
\end{equation*}
where $c_0\in \mathbb{R}^*$, $c_1\in \mathbb{R}$, and $\alpha \in \mathbb{R}^*_+$ with $\alpha\exp\left(\frac{2v}{c_0} \right)-1>0$.\\
\textbf{Case 2.1.2:} If $f{'}\neq 0$, i.e., $a_0\neq0$, the basic vector fields are harmonic if and only if
$$(S_2)\,\begin{cases}
 h{'''}\left(1+f{'}^2\right)\left(1+f{'}^2+h{'}^2\right)-2h{'}h{''}^2\left(1+f{'}^2\right)=0,\vspace{0.3cm}\\
h{''}^2\left(1+f{'}^2\right)\left(1+f{'}^2-h{'}^2\right)+h{'}h{'''}\left(1+f{'}^2\right)\left(1+f{'}^2+h{'}^2\right)=0.
\end{cases}$$
Hence,
\begin{align*}
(S_2) & \iff \begin{cases}
h{'''}\left(1+f{'}^2\right)\left(1+f{'}^2+h{'}^2\right)=2h{'}h{''}^2\left(1+f{'}^2\right)\\
h{''}^2\left(1+f{'}^2\right)\left(1+f{'}^2-h{'}^2\right)+h{'}h{'''}\left(1+f{'}^2\right)\left(1+f{'}^2+h{'}^2\right)=0
\end{cases}\\
& \iff \begin{cases}
h{'''}\left(1+f{'}^2\right)\left(1+f{'}^2+h{'}^2\right)=2h{'}h{''}^2\left(1+f{'}^2\right)\\
h{''}^2\left(1+f{'}^2\right)\left(1+f{'}^2-h{'}^2\right)+2h{'}^2h{''}^2\left(1+f{'}^2\right)=0 \end{cases}\\
& \iff  \begin{cases}
h{'''}\left(1+f{'}^2+h{'}^2\right)=2h{'}h{''}^2\\
h{''}^2\left(1+f{'}^2\right)+h{'}^2h{''}^2=0 \end{cases}\\
& \iff \begin{cases}
h{'''}\left(1+f{'}^2+h{'}^2\right)=2h{'}h{''}^2\\
h{''}^2\left(1+f{'}^2+h{'}^2\right)=0 \end{cases}
\end{align*}
Since $1+f{'}^2+h{'}^2>1$ thus,
$$(S_2) \iff \begin{cases}
h{'''}\left(1+f{'}^2+h{'}^2\right)=2h{'}h{''}^2\\
h{''}^2=0 \end{cases}$$
and therefore $h(v)=b_0 v+b_1\;\;with\;\;b_0\in \mathbb{R}^*$ and $b_1\in \mathbb{R}$.\\
\textbf{Case 2.2:} If $f{'}\neq 0,\;\;f{''}\neq 0,\;\;h{'}\neq 0,\; \;h{''}\neq 0$ and $f{''}\left(1+h{'}^2\right)+f{'}^2h{''}=0$.
We get $\frac{f{''}}{f{'}^2}=-\frac{h{''}}{1+h{'}^2}=a_0,$ where $a_0\in \mathbb{R}^*.$
So that
\begin{eqnarray*}
   f(u)&=&-\frac{1}{a_0}\,\ln| a_0\;u+a_1|+a_2,\;\;a_1,a_2\in \mathbb{R}\;and\;u\neq -\frac{a_1}{a_0}, \\
   h(v)&=&\frac{1}{a_0}\,\ln|\cos(a_0\;v+a_3)|+a_4,\;\;a_3,a_4\in \mathbb{R},\;a_0\,v+a_3\in \left]-\frac{\pi}{2},\frac{\pi}{2}\right[.
\end{eqnarray*}
 We have $f{'}^2=\frac{f{''}}{a_0},$ and $1+h{'}^2=-\frac{h{''}}{a_0}$ implies $f{'''}=2\,a_0\,f{'}f{''}$,
 by substituting this equations into the second equation of $(S_1)$, we obtain $$h{'}\left[2\,a_0\,f{'}f{''}\left(-\frac{{h{''}}}{a_0}\right)\left(-\frac{h{''}}{a_0}+\frac{f{''}}{a_0}\right)-2\,f{'}f{''}\left(h{''}\left(1+\frac{f{''}}{a_0}\right)+f{''}\left(-\frac{h{''}}{a_0}\right)\right)\right]=0,$$
 this is equivalently to $f{''}-h{''}=-a_0,$ hence
 $f{''}=k_1,\;h{''}=k_2,$ such that $k_1-k_2=-a_0$,
 thus $f=k_1\;u+c_1,\;\;h=k_2\;v+c_2,$ such that $k_1-k_2=-a_0,\;\;c_1,\;c_2\in \mathbb{R}$.
 In this case $f$ and $h$ are not solutions of the system $(S_1)$.
\end{proof}

Theorem \ref{thm1} illustrates how to obtain the surface mentioned in Figure \ref{fig1}.

\begin{figure}[h!]
\centering
\begin{tikzpicture}
\begin{axis}[
    view={120}{30},
    xlabel={$x$}, ylabel={$y$}, zlabel={$z$},
    domain=0.5:2.5,
    y domain=-2:2,
    samples=50,
    samples y=20,
    colormap={gray}{color=(gray!50) color=(gray!80)},
]
\addplot3[
    surf,
    fill=gray!60,   
    draw=black,     
]
({x}, {y}, {sqrt(exp(2*x)-1) - atan(sqrt(exp(2*x)-1))});
\end{axis}
\end{tikzpicture}
\caption{Example of a translation surface $z=\sqrt{e^{2x}-1}-\arctan(\sqrt{e^{2x}-1})$ whose basis vector fields are harmonic}\label{fig1}
\end{figure}

\section{Harmonic basis vector fields on surfaces of revolution}
A surface of revolution is a surface $M^2 \subset \mathbb{R}^3$ obtained by rotating a
planar curve $\Gamma$, called the generatrix, through a full revolution about a fixed
axis of rotation $\pi$, which in general does not intersect $\Gamma$ except possibly at
its endpoints.
Let $\Gamma$ be a curve defined by
\[
\Gamma : I \to \mathbb{R}^3, \qquad u \mapsto (f(u),0,h(u)),
\]
where $I$ is an open interval of $\mathbb{R}$ and $f,h$ are smooth functions on $I$ such
that $f'(u)\neq 0$ and $h'(u)\neq 0$ for all $u\in I$. By rotating the curve $\Gamma$ about
the $z$-axis, we obtain a surface of revolution $M^2$ parametrized by
\[
\phi : I \times (0,2\pi) \to \mathbb{R}^3, \qquad
(u,v) \mapsto \big(f(u)\sin v,\, f(u)\cos v,\, h(u)\big).
\]
The coordinate vector fields (basis vector fields) on the surface of revolution $M^2$
associated with the above parametrization are given by
\[
\frac{\partial}{\partial u}
= \big(f'(u)\sin v,\, f'(u)\cos v,\, h'(u)\big),
\qquad
\frac{\partial}{\partial v}
= \big(f(u)\cos v,\,-f(u)\sin v,\, 0\big).
\]
We set $\sigma(u)^2 = f'(u)^2 + h'(u)^2.$
Then the induced Riemannian metric $g$ on the surface $S$ is given, in the coordinates
$(u,v)$, by
\[
g =
\begin{pmatrix}
\sigma(u)^2 & 0 \\[0.2cm]
0 & f(u)^2
\end{pmatrix}.
\]
We obtain the rough Laplacians of the coordinate vector fields as
\begin{align}
\Delta \left(\frac{\partial}{\partial u}\right)
&= \left( \frac{\sigma''(u)\,\sigma(u) - \big(\sigma'(u)\big)^2}{\sigma(u)^4}
      - \frac{(f'(u))^2}{f(u)^2 \,\sigma(u)^2}
      + \frac{f'(u)\,\sigma'(u)}{f(u)\,\sigma(u)^3} \right)
      \frac{\partial}{\partial u}, \\
\Delta \left(\frac{\partial}{\partial v}\right)
&= \left( \frac{f''(u)\,\sigma(u) - f'(u)\,\sigma'(u)}{f(u)\,\sigma(u)^3} \right)
      \frac{\partial}{\partial v}.
\end{align}
The vector fields $\frac{\partial}{\partial u}$ and $\frac{\partial}{\partial v}$ on
the surface $M^2$ are harmonic if and only if
\begin{align}\label{1-1}
\begin{cases}
\dfrac{\sigma''(u)\,\sigma(u)-\big(\sigma'(u)\big)^{2}}{\sigma(u)^{4}}
 - \dfrac{(f'(u))^{2}}{f(u)^{2}\,\sigma(u)^{2}}
 + \dfrac{f'(u)\,\sigma'(u)}{f(u)\,\sigma(u)^{3}} = 0, \\[0.2cm]
\dfrac{f''(u)\,\sigma(u) - f'(u)\,\sigma'(u)}{f(u)\,\sigma(u)^{3}} = 0.
\end{cases}
\end{align}
Equivalently, multiplying through by appropriate powers of $f(u)$ and $\sigma(u)$,
the above system can be rewritten as
\begin{align}
\label{1-2}
\big(\sigma''(u)\,\sigma(u)-\sigma'(u)^{2}\big) f(u)^{2}
- (f'(u))^{2}\, \sigma(u)^{2}
+ f(u)\, \sigma(u)\, f'(u)\, \sigma'(u) &= 0, \\[0.2cm]
\label{1-3}
f''(u)\,\sigma(u) - f'(u)\,\sigma'(u) &= 0.
\end{align}
Replacing
\[
\sigma'(u) = \frac{f'(u) f''(u) + h'(u) h''(u)}{\sqrt{f'(u)^2 + h'(u)^2}}
\]
into equation \eqref{1-3}, we obtain
\begin{align}\label{1-4}
h'(u) \big(f''(u)\,h'(u) - h''(u)\,f'(u)\big) = 0.
\end{align}

\begin{figure}[hh!]\label{fig2}
\centering
\begin{tikzpicture}
\begin{axis}[
    view={120}{30},
    xlabel={$x$}, ylabel={$y$}, zlabel={$z$},
    domain=0:100,
    y domain=0:360,
    samples=50,
    samples y=20,
    colormap={gray}{color=(gray!50) color=(gray!80)},
]
\addplot3[
    surf,
    fill=gray!60,   
    draw=black,     
]
({1+sqrt(exp(2*x)-1)*cos(y)}, {sqrt(exp(2*x)-1)*sin(y)}, {sqrt(exp(2*x)-1)});
\end{axis}
\end{tikzpicture}
\caption{Example of a revolution surface whose basis vector fields are harmonic with $f(x)=h(x)=\sqrt{e^{2x}-1}$.}\label{fig2}
\end{figure}

Recall that the Gauss curvature $K$ and the mean curvature $H$ of a surface of revolution
are given by
\begin{align*}
K &= \frac{h'(u)\,\big(f''(u)\,h'(u) - h''(u)\,f'(u)\big)}{f(u)\,\big(f'(u)^2 + h'(u)^2\big)^2}, \\
H &= \frac{-h'(u) + f(u)\,\big(h''(u)\,f'(u) - h'(u)\,f''(u)\big)}{2 f(u)}.
\end{align*}
Then equation \eqref{1-4} shows that the vector field $\frac{\partial}{\partial v}$ is
harmonic if and only if $K = 0$. From $K=0$, we obtain
\[
h'(u) = 0 \quad \text{or} \quad f''(u)\,h'(u) - h''(u)\,f'(u) = 0.
\]
In these cases, the mean curvature $H$ satisfies
\[
H = 0 \quad \text{or} \quad H = \frac{-h'(u)}{2 f(u)}.
\]
This implies that surfaces of revolution whose basis vector fields are harmonic necessarily have zero Gaussian curvature; in other words, such surfaces are either planar or parabolic.\\

We illustrate how to obtain the surface mentioned above in Figure \ref{fig2}.

\begin{theorem}\label{thm2}
A surface of revolution admits harmonic coordinate vector fields
if and only if it is a plane, a cylinder, or a cone.
\end{theorem}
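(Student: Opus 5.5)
We would prove the two implications separately. Necessity comes from the reduction already carried out above; sufficiency comes from exhibiting, for each of the three surfaces, an explicit profile parametrization that solves the system (\ref{1-2})--(\ref{1-3}). Throughout we read the statement as "some parametrization of the given form has harmonic basis vector fields", since harmonicity of $\frac{\partial}{\partial u}$ depends on how the generatrix is parametrized. In the necessity direction we also allow $h'\equiv 0$ or $f'\equiv 0$; the standing assumption $f'h'\neq 0$ would otherwise exclude the plane and the cylinder.

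\emph{Necessity.} By (\ref{1-4}), harmonicity of $\frac{\partial}{\partial v}$ forces $h'\,(f''h'-h''f')=0$, i.e. $K=0$. On an interval where $h'\equiv 0$, the generatrix is a horizontal segment and $M^2$ is a piece of the plane $z=\mathrm{const}$. On an interval where $h'\neq 0$, we get $(f'/h')'=(f''h'-h''f')/h'^2=0$, so $f=a\,h+b$ for constants $a,b$, and the generatrix is a segment of a straight line. If $a=0$, then $f\equiv b$ and $M^2$ is a cylinder. If $a\neq 0$, the line meets the axis at $h=-b/a$ and $M^2$ is a cone. A connectedness and continuity argument (the dichotomy holds on open sets whose union is dense, and the line is determined by the $2$-jet) shows that only one case occurs on the interval $I$.

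\emph{Sufficiency.} In each case we substitute the linear relation into (\ref{1-2})--(\ref{1-3}) and produce one explicit solution.
\begin{itemize}
\item \emph{Cone.} After a vertical translation, $f=a\,s(u)$ and $h=s(u)$ with $s'>0$. Then $\sigma=c\,s'$ with $c=\sqrt{1+a^2}$, and (\ref{1-3}) holds identically. Equation (\ref{1-2}) reduces to
\[
(s'''s'-s''^2)\,s^2-s'^4+s\,s'^2s''=0 .
\]
For $s=e^{ku}$ the first bracket vanishes and the rest equals $k^3(1-k)e^{4ku}$, so $s=e^{u}$ is a solution.
\item \emph{Cylinder.} Here $f\equiv b$, so (\ref{1-3}) is trivial and (\ref{1-2}) becomes $\sigma''\sigma=\sigma'^2$ with $\sigma=|h'|$. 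The choice $h=u$ works.
\item \emph{Plane.} Here $h\equiv\mathrm{const}$ and $\sigma=|f'|$. Then (\ref{1-3}) holds identically, and (\ref{1-2}) is the same ODE as in the cone case with $s=f$, so $f=e^u$ works.
\end{itemize}

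\emph{Expected obstacles.} The main one is bookkeeping in the necessity step rather than a real difficulty: making the case split rigorous when $h'$ vanishes on some subsets but not others, and reconciling it with the paper's standing hypothesis $f'h'\neq0$. The second point to check carefully is that (\ref{1-3}) is automatic on straight generatrices. This requires $|h'|$ to have constant sign, which holds because $h'\neq 0$ on the interval, so that $\sigma'/\sigma=s''/s'$. The remaining verifications are short direct substitutions.
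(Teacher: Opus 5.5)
Your proposal is correct and follows the paper's route. Harmonicity of $\frac{\partial}{\partial v}$ forces $h'(f''h'-h''f')=0$, i.e.\ $K=0$, so the generatrix is straight and the surface is a plane, cylinder or cone; sufficiency then comes from exhibiting in each case a profile parametrization solving (\ref{1-2}). You give one explicit solution where the paper writes down families, and you treat the global case split and the standing hypothesis $f'h'\neq 0$ more carefully than the paper does.

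Two small corrections. First, for $s=e^{ku}$ one gets $-s'^4+s\,s'^2s''=-k^4e^{4ku}+k^4e^{4ku}=0$ for every $k\neq 0$, not $k^3(1-k)e^{4ku}$; your conclusion that $s=e^u$ works is unaffected. Second, the gluing step rests on regularity $(f',h')\neq(0,0)$ rather than on a $2$-jet: at an endpoint in $I$ of a component of $\{h'\neq 0\}$, continuity of $f'=a\,h'$ would force $f'=h'=0$.
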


\begin{proof}
We recall that $$\sigma(u)^2 = f'(u)^2 + h'(u)^2.$$
\textbf{Case 1:} Suppose $\sigma = c$, where $c$ is a positive real constant. Then
\[
h'(u) = \pm \sqrt{c^2 - f'(u)^2}, \quad \text{with } f'(u)^2 \leq c^2.
\]
In this case, formula (\ref{1-1}) becomes
\begin{align*}
\begin{cases}
\Delta\Big(\dfrac{\partial}{\partial u}\Big) = \dfrac{-f'(u)^2}{c^2 f(u)^2} \dfrac{\partial}{\partial u},\\[2mm]
\Delta\Big(\dfrac{\partial}{\partial v}\Big) = \dfrac{f''(u)}{c^2 f(u)} \dfrac{\partial}{\partial v}.
\end{cases}
\end{align*}
In the case $f'(u) = 0$, i.e., $f(u) = c \neq 0$, we have
\[
h'(u) = \sqrt{c^2 - a^2}, \quad \text{so} \quad h(u) = u \sqrt{c^2 - a^2}.
\]
Then, the curve is parametrized by
\[
(c, 0, u \sqrt{c^2 - a^2}), \quad \text{with } a, c \in \mathbb{R}_+^* \text{ and } a^2 < c^2.
\]
Consequently, the surface generated is parametrized as
\[
\phi(u,v) = \big(a \cos v, \; a \sin v, \; u \sqrt{c^2 - a^2}\big),
\]
which is a cylinder of revolution defined by the equation
\[
x^2 + y^2 = a^2.
\]
\textbf{Case 2:}
If $h'(u) = 0$, then $h$ is constant. Let us take $h(u) = a$, with $a \in \mathbb{R}_+$.
In this case, we have $\sigma(u) = \pm f'(u)$.
The vector field $\frac{\partial}{\partial v}$ is harmonic and
\[
\Delta\Big(\frac{\partial}{\partial u}\Big) =
\frac{f^2 f' f''' - f^2 (f'')^2 - (f')^4 + f (f')^2 f''}{f^2 (f')^4} \, \frac{\partial}{\partial u}.
\]
Thus, the vector field $\frac{\partial}{\partial u}$ is harmonic if and only if
\begin{equation}\label{2-1}
f^2 f' f''' - f^2 (f'')^2 - (f')^4 + f (f')^2 f'' = 0.
\end{equation}
Thus, we have
\[
f(u) = \sqrt{k e^{\alpha u} + \beta}, \quad
k \in \mathbb{R}^*, \;\; \beta \in \mathbb{R}_+, \;\; \text{with } \beta + k e^{\alpha u} > 0.
\]
In this case, the surface is parametrized by
\[
\phi(u,v)
= \big(\sqrt{k e^{\alpha u} + \beta} \cos v, \; \sqrt{k e^{\alpha u} + \beta} \sin v, \; a\big),
\]
where
$
a \in \mathbb{R}_+, \;\; k \in \mathbb{R}^*, \;\; \beta \in \mathbb{R}_+, \;\; \text{with } \beta + k e^{\alpha u} > 0.
$
Therefore, the surface of revolution is the plane defined by the equation
$
z = a.
$\\
\textbf{Case 3:}
If $h' \neq 0$ and $f'' h' - h'' f' = 0$  ($K = 0$), then the surface is of parabolic type. We distinguish two subcases. \\
\textbf{Case 3.1:} If $f' = 0$. Suppose $f(u) = a \neq 0$, then $\sigma(u) = \pm h'(u)$. We have
\[
\Delta\Big(\frac{\partial}{\partial v}\Big) = 0,
\]
but
\[
\Delta\Big(\frac{\partial}{\partial u}\Big) = 0 \quad \text{if} \quad
\frac{\sigma'' \sigma - (\sigma')^2}{\sigma^4} - \frac{(f')^2}{f^2 \sigma^2} + \frac{f' \sigma'}{f \sigma^3} = 0.
\]
This is equivalent to
\[
\frac{(\sigma'' \sigma - (\sigma')^2) f^2 - (f')^2 \sigma^2 + f \sigma f f' \sigma}{f^2 \sigma^4} = 0
\quad \Longleftrightarrow \quad
\frac{h''' h' - (h'')^2}{(h')^2} = 0. \label{2-2}
\]
Therefore, the basis vector fields are harmonic if and only if
\begin{equation}
h''' h' - (h'')^2 = 0.
\end{equation}
We now discuss two subcases for the third case. \\

\textbf{Case 3.1.1:} If $h'' = 0$. In this situation, the condition
\[
h''' h' - (h'')^2 = 0
\]
is automatically satisfied, which implies
\[
\Delta\Big(\frac{\partial}{\partial u}\Big) = 0.
\]

Thus, the basis vector fields are harmonic if
\[
f(u) = a, \quad h(u) = b u + c, \quad \text{where } a, b \in \mathbb{R}^*, \; c \in \mathbb{R}.
\]

The corresponding surface is parametrized by
\[
\phi(u,v) = (a \cos v, \; a \sin v, \; b u + c), \quad
a, b \in \mathbb{R}_+^*, \; c \in \mathbb{R}.
\]

Hence, the surface is a cylinder of revolution defined by the equation
\[
x^2 + y^2 = a^2, \quad a \in \mathbb{R}_+^*.
\]

\textbf{Case 3.1.2:} If $h'' \neq 0$. From equation (\ref{2-2}), we have $\frac{h'''}{h''} = \frac{h''}{h'}$. So that
$h(u) = \frac{b}{a} e^{a u} + c,$ where $a, b \in \mathbb{R}^*, \; c \in \mathbb{R}.$
The basis vector fields are harmonic if
\[
f(u) = a, \quad h(u) = \frac{b}{a} e^{a u} + c, \quad a, b \in \mathbb{R}^*, \; c \in \mathbb{R}.
\]
Hence, the surface is parametrized by
\[
\phi(u,v) = \Big(a \cos v, \; a \sin v, \; \frac{b}{a} e^{a u} + c \Big), \quad
a, b \in \mathbb{R}_+^*, \; c \in \mathbb{R}.
\]

Moreover, the surface is a surface of revolution, namely a cylinder defined by
\[
x^{2}+y^{2}=a^{2}, \qquad a\in \mathbb{R}_{+}^{*}.
\]

\textbf{Case 3.2:} Assume that ${f'\neq 0}$. Then we have
\begin{align*}
f''h'-h''f' = 0
&\;\Longleftrightarrow\; \frac{f''}{f'}=\frac{h''}{h'} \\
&\;\Longleftrightarrow\; \ln |f'|=\ln |h'| \\
&\;\Longleftrightarrow\; f'=a\,h', \qquad a\in\mathbb{R}^{*} \\
&\;\Longleftrightarrow\; f(u)=a\,h(u)+b,
\qquad a\in\mathbb{R}^{*},\; b\in\mathbb{R}.
\end{align*}

Furthermore,
\begin{align*}
\sigma^{2}=f'^{2}+h'^{2}
&\;\Longrightarrow\; \sigma^{2}=(1+a^{2})\,f'^{2} \\
&\;\Longrightarrow\; \sigma=\pm\sqrt{1+a^{2}}\,f'.
\end{align*}

In this case, we obtain
\begin{align*}
\begin{cases}
\displaystyle
\Delta\!\left(\frac{\partial}{\partial u}\right)
=\left(
\frac{\sigma''\sigma-\sigma'^{2}}{\sigma^{4}}
-\frac{f'^{2}}{f^{2}\sigma^{2}}
+\frac{f'\sigma'}{f\sigma^{3}}
\right)\frac{\partial}{\partial u},
\\[1em]
\displaystyle
\Delta\!\left(\frac{\partial}{\partial v}\right)=0,
\end{cases}
\end{align*}
which is equivalent to
\begin{align*}
\begin{cases}
\displaystyle
\Delta\!\left(\frac{\partial}{\partial u}\right)
=\frac{(\sigma''\sigma-\sigma'^{2})f^{2}
-f'^{2}\sigma^{2}
+f\sigma f'\sigma'}
{f^{2}\sigma^{4}}
\frac{\partial}{\partial u},
\\[1em]
\displaystyle
\Delta\!\left(\frac{\partial}{\partial v}\right)=0,
\end{cases}
\end{align*}
and hence
\begin{align*}
\begin{cases}
\displaystyle
\Delta\!\left(\frac{\partial}{\partial u}\right)
=\frac{1}{a}
\frac{
h^{2}h'h'''-h^{2}h''^{2}-h'^{4}+hh'^{2}h''
}{
h^{2}h'^{4}
}
\frac{\partial}{\partial u},
\\[1em]
\displaystyle
\Delta\!\left(\frac{\partial}{\partial v}\right)=0.
\end{cases}
\end{align*}
We obtain that the coordinate vector fields are harmonic if
\[
f(u)=a\,h(u)+b, \qquad a\in\mathbb{R}^{*}, \; b\in\mathbb{R}.
\]

Moreover, the function $h$ satisfies equation~\eqref{2-1} with $h'\neq 0$, namely
\[
h^{2}h'h'''-h^{2}h''^{2}-h'^{4}+h h'^{2}h''=0.
\]
Solving this differential equation yields
\[
\begin{cases}
h(u)=\sqrt{k e^{\alpha u}+\beta},\\[0.5em]
f(u)=a\sqrt{k e^{\alpha u}+\beta}+b,
\end{cases}
\]
where
\[
a,k\in\mathbb{R}^{*}, \qquad b,\beta\in\mathbb{R},
\qquad \text{with } \beta+k e^{\alpha u}>0.
\]

Hence, the surface is parametrized by
\begin{align*}
\phi(u,v)
&=\bigl((a h(u)+b)\cos v,\,(a h(u)+b)\sin v,\,h(u)\bigr)\\
&=\Bigl(\bigl(a\sqrt{k e^{\alpha u}+\beta}+b\bigr)\cos v,\,
\bigl(a\sqrt{k e^{\alpha u}+\beta}+b\bigr)\sin v,\,
\sqrt{k e^{\alpha u}+\beta}\Bigr).
\end{align*}

Therefore, the surface is a cone parametrized by $\phi$. Its Gaussian curvature and mean curvature are given by
\[
K=0, \qquad
H=\frac{\alpha k e^{\alpha u}}{2\sqrt{k e^{\alpha u}+\beta}}\neq 0.
\]

\end{proof}

\subsection*{Conflict of interest} The authors declare no conflict of interest.

\subsection*{Data Availability} Not applicable.



\begin{thebibliography}{99}

\bibitem{BW}
P. Baird, J. C. Wood,
{\it Harmonic morphisms between Riemannian manifolds},
Clarendon Press, Oxford (2003).

\bibitem{Chen}
B.-Y. Chen,
{\it Geometry of submanifolds},
Marcel Dekker, New York (1973).


\bibitem{DoCarmo}
M. do Carmo,
{\it Differential Geometry of Curves and Surfaces},
Prentice-Hall, Englewood Cliffs, NJ (1976).

\bibitem{DillenFastenakels}
F. Dillen, J. Fastenakels,
{\it Constant angle surfaces in Euclidean spaces},
Houston J. Math. \textbf{38}, 1067--1088 (2012).

\bibitem{ES}
J. Eells, J. H. Sampson,
{\it Harmonic mappings of Riemannian manifolds},
Amer. J. Math. \textbf{86}, 109--160 (1964).

\bibitem{Filip}
R. Filipkiewicz,
{\it Four dimensional geometries},
PhD thesis, University of Warwick, Great Britain (1983).

\bibitem{G}
O. Gil-Medrano,
{\it Relationship between volume and energy of vector fields},
Differential Geom. Appl. \textbf{15}, 137--152 (2001).

\bibitem{GK}
S. Gudmundsson, E. Kappos,
{\it On the Geometry of Tangent Bundles},
Expo. Math. \textbf{20}, 1--41 (2002).

\bibitem{H1}
R. Hamilton,
{\it The Ricci flow on surfaces},
Contemp. Math. \textbf{71}, 237--262 (1988).

\bibitem{H2}
R. Hamilton,
{\it The Harnack estimate for the Ricci flow},
J. Differential Geom. \textbf{37}, 225--243 (1993).

\bibitem{LiuYu}
H. Liu, Y. Yu,
{\it Harmonic vector fields on Riemannian manifolds},
J. Math. Anal. Appl. \textbf{339}, 219--226 (2008).

\bibitem{cherif}
A. Mohammed Cherif,
{\it On the nonexistence of harmonic and bi-harmonic maps},
Bull. Math. Soc. Sci. Math. Roumanie, \textbf{116}, 173--184 (2025).

\bibitem{ON}
B. O'Neill,
{\it Semi-Riemannian Geometry},
Academic Press, New York (1983).

\bibitem{Onda}
K. Onda,
{\it Lorentz Ricci Solitons on $3$-dimensional Lie groups},
Geom. Dedicata \textbf{147}, 313--322 (2010).

\bibitem{Wiegmink}
G. Wiegmink,
{\it Total bending of vector fields on Riemannian manifolds},
Math. Ann. \textbf{303}, 325--344 (1995).

\bibitem{Wood}
C. M. Wood,
{\it Harmonic sections of vector bundles},
Proc. London Math. Soc. \textbf{33}, 129--151 (1976).

\end{thebibliography}
\end{document}